\theoremstyle{plain}
\newtheorem{theorem}{Theorem}[section]
\newtheorem{corollary}[theorem]{Corollary}
\newtheorem{proposition}[theorem]{Proposition}
\newtheorem{lemma}[theorem]{Lemma}
\newtheorem*{question}{Question}
\theoremstyle{remark}
\newtheorem*{remark}{Remark}
\theoremstyle{definition}
\newtheorem*{definition}{Definition}
\newcommand{\id}{{{\mathchoice {\rm 1\mskip-4mu l} {\rm 1\mskip-4mu l}
{\rm 1\mskip-4.5mu l} {\rm 1\mskip-5mu l}}}}
\newcommand{\om}{\omega}
\newcommand{\w}{\wedge}
\newcommand{\Z}{{\mathbb{Z}}}
\newcommand{\R}{{\mathbb{R}}}
\newcommand{\C}{{\mathbb{C}}}
\renewcommand{\max}{{\rm max}}
\newcommand{\sgn}{{\rm sgn\,}}
\newcommand{\comment}[1]{}
\newcommand{\beq}{\begin{equation}}
\newcommand{\beqn}{\begin{equation}\nonumber}
\newcommand{\eeq}{\end{equation}}
\newcommand{\bea}{\begin{equation}\begin{aligned}}
\newcommand{\bean}{\begin{equation}\begin{aligned}\nonumber}
\newcommand{\eea}{\end{aligned}\end{equation}}
\newcommand{\RP}{\mathbb{RP}}
\newcommand{\Sb}{\overline{\Sigma}}
\newcommand{\uh}{\hat{u}}
\newcommand{\vh}{\hat{v}}
\begin{document}

\title{The Conley-Zehnder indices of the rotating Kepler problem}
\author{Peter Albers}
\author{Joel W.~Fish}
\author{Urs Frauenfelder}
\author{Otto van Koert}
\address{
	Peter Albers\\
	Mathematisches Institut\\
	Westf\"alische Wilhelms-Universi\"at M\"unster
	}
\email{peter.albers@uni-muenster.de}
\address{
	Joel W.~Fish\\
	Max Planck Institute\\
    Leipzig
	}
\email{joel.fish@mis.mpg.de}
\address{
    Urs Frauenfelder\\
    Department of Mathematics and Research Institute of Mathematics\\
    Seoul National University}
\email{frauenf@snu.ac.kr}
\address{
    Otto van Koert\\
    Department of Mathematics and Research Institute of Mathematics\\
    Seoul National University}
\email{okoert@snu.ac.kr}

\keywords{rotating Kepler problem, Conley-Zehnder index, dynamically convex, Kirkwood gaps}

\maketitle

\begin{abstract}
We determine the Conley-Zehnder indices of all periodic orbits of the rotating Kepler problem for energies below the critical Jacobi energy. Consequently, we show the universal cover of the bounded component of the regularized energy hypersurface is dynamically convex. Moreover, in the universal cover there is always precisely one periodic orbit with Conley-Zehnder index 3, namely the lift of the doubly covered retrograde circular orbit.
\end{abstract}


\section{Introduction}

The Kepler problem in rotating coordinates arises as the limit of the planar circular restricted 3-body problem when the mass of one of the primaries goes to zero, and hence serves as an approximation of the restricted planar 3-body problem for a small mass parameter.
The ultimate goal is to study the dynamics of the 3-body problem using finite energy foliations.
One essential ingredient is the so-called Conley-Zehnder index of a periodic orbit.
These indices play a central role in the theory of finite energy foliations, symplectic field theory, Fukaya $A_\infty$-categories, and various Floer theories.

In this article we completely determine the Conley-Zehnder indices of all periodic orbits for all energies below the (unique) critical value of the Jacobi energy in the regularized system. The upshot is that on every energy hypersurface there exists precisely one periodic orbit of Conley-Zehnder index 1, namely the simply-covered retrograde circular orbit. This orbit is non-contractible. Moreover, there exists a unique contractible periodic orbit of Conley-Zehnder index 3, namely the doubly-covered retrograde circular orbit. In particular, for energies below the critical value of the Jacobi energy, the universal cover of an energy hypersurface in the rotating Kepler problem is dynamically convex, i.e.~contractible periodic Reeb orbits have Conley-Zehnder index at least $3$.

We point out that due to the $S^1$ action on the rotating Kepler problem most periodic orbits are in fact degenerate. The Conley-Zehnder index we consider in this article is the one from \cite{HWZ_the_dynamics_on_three_dimensional_strictly_conve_eneergy_surfaces} which is lower semi-continuous and, in the non-degenerate situation, equals the transversal Conley-Zehnder index.

\begin{theorem}\label{thm:main}
For energies below the critical value of the Jacobi energy, the bounded component of an energy hypersurface of the rotating Kepler system is dynamically convex. Moreover, there is precisely one periodic orbit with Conley-Zehnder index $3$, namely the doubly covered retrograde circular orbit.
\end{theorem}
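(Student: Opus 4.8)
The plan is to exploit the complete integrability of the rotating Kepler problem. Writing the Hamiltonian as $H = H_{\mathrm{Kep}} - L$, with $H_{\mathrm{Kep}}$ the Kepler Hamiltonian and $L$ the angular momentum, the two functions Poisson--commute, so the flow of $H$ is the Kepler flow composed with the rigid rotation generated by $-L$. In Delaunay action--angle coordinates $(L_D,G,\ell,g)$ one has $H_{\mathrm{Kep}} = -\tfrac{1}{2L_D^2}$ and $L = G$, so on each Liouville torus the flow is linear with frequency vector $(\partial_{L_D}H,\partial_G H) = (L_D^{-3},-1)$. After Moser regularization the bounded component becomes a compact contact $3$-manifold, diffeomorphic to $\RP^3$, on which the Reeb flow is a reparametrization of this linear flow; I would carry out all index computations there, reducing the Conley--Zehnder index to winding/rotation numbers of the linearized flow.

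First I would classify the periodic orbits. A trajectory closes up exactly when the two frequencies are commensurate, i.e. when $L_D^{-3}\in\Q$; this singles out a countable family of resonant Liouville tori, each entirely filled by an $S^1$-family of parallel periodic orbits, together with the two degenerate circular orbits at the boundary $G=\pm L_D$ of the action region, where the eccentricity vanishes and the torus collapses. The retrograde circular orbit sits at $G=-L_D$ and the prograde one at $G=+L_D$. Since the regularized hypersurface is $\RP^3$ with $\pi_1=\Z/2$, generated by the class of the retrograde circular orbit, I would next record the homotopy class of every orbit in the list by its mod-$2$ winding: odd iterates of the retrograde circle are non-contractible, while its even iterates, and those resonant elliptic orbits whose total fiber winding is even, are contractible.

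The heart of the argument is the computation of the transversal Conley--Zehnder index of every orbit in the list. Along a Reeb orbit the contact plane is two-dimensional and splits, thanks to integrability, into a direction tangent to the carrying torus and a direction transverse to it (the $dG$ direction); I would compute the rotation number of the linearized flow in each, the transverse one being governed by the twist of the frequency ratio $\omega_\ell/\omega_g$ across the foliation by Liouville tori. Bott's iteration formula then expresses $\CZ$ of the $k$-fold cover in terms of these rotation numbers. For the circular orbits the linearization is explicit and reduces to a harmonic-oscillator computation, which I expect to give rotation number in $(\tfrac12,1)$ for the retrograde circle, so that $\CZ=1$ for the simple orbit and $\CZ=3$ for its double cover. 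Because the $S^1$-symmetry renders the resonant orbits degenerate, I would throughout use the lower semicontinuous index of \cite{HWZ_the_dynamics_on_three_dimensional_strictly_conve_eneergy_surfaces}, which for a Morse--Bott family is obtained from the same rotation data by the appropriate floor/ceiling correction.

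Finally I would assemble the indices and minimize over contractible orbits. Combining the explicit circular values with the Bott contributions of the resonant families should show that every contractible periodic orbit has $\CZ\geq 3$, establishing dynamical convexity, and that equality holds only for the doubly covered retrograde circular orbit. \textbf{The main obstacle} I anticipate is precisely this last uniform estimate: one must control the rotation numbers of all resonant elliptic families, including those of arbitrarily high resonance order and those approaching the degenerate circular boundary and the regularized collision locus, and verify that no contractible member drops to index $1$ and that none but the doubly covered retrograde circle lands at exactly $3$. This is where the genuine analysis lies, since it cannot be reduced to a finite case check and requires monotonicity/convexity of the Delaunay frequency map together with careful handling of the regularization.
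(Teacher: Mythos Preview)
Your outline is sound and would in principle yield the theorem, but the paper proceeds quite differently and thereby avoids exactly the obstacle you single out at the end. Both arguments begin the same way: split the periodic orbits into the two circular families and the resonant $T_{k,l}$ tori, and compute the circular indices directly by linearization (your harmonic-oscillator step), obtaining $\mu=1$ for the simple retrograde orbit and $\mu=3$ for its double cover. For the tori, however, the paper does \emph{not} estimate rotation numbers on the Liouville foliation. Instead it uses that the orbits are Finsler geodesics on $S^2$, so the Conley--Zehnder index equals a Morse index of the energy functional; it then follows the $(k-l)$-fold covered direct circular orbit through decreasing values of $c$, notes that its Morse index starts at $2(k-l)+1$ and jumps by $+2$ at each bifurcation value $c^-_{k',l'}$, and invokes invariance of local Morse homology at the birth of $T_{k,l}$ to conclude that the newborn torus inherits the pre-bifurcation index, namely $2k-1$. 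Since a $T_{k,l}$ orbit has covering number $k-l$, contractibility forces $k-l$ even and hence $k\ge 3$, so every contractible torus orbit has index at least $5$, and the theorem follows. The payoff of the paper's route is that the index of \emph{every} resonant family is read off from the already explicit circular data plus a finite homological bookkeeping, so no uniform rotation-number estimate across infinitely many resonances, and no separate analysis near the regularized collision locus, is ever needed; your action-angle approach is more intrinsic and would give the same numbers, but completing it requires carrying out precisely the estimate you flag as the main difficulty.
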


The standard way to construct the universal cover of an energy hypersurface in the regularized rotating Kepler problem (and in fact the regularized restricted planar three body problem) as $S^3\subset\C^2$ is via the Levi-Civita embedding, see \cite{Levi_Civita}. In \cite[Theorem 3.4]{HWZ_the_dynamics_on_three_dimensional_strictly_conve_eneergy_surfaces} Hofer-Wysocki-Zehnder prove that a strictly convex $S^3\subset\C^2$ is automatically dynamically convex, which in turn guarantees that the Conley-Zehnder index of each orbit is at least three. However, this result does not directly apply to the Levi-Civita embedding:

\begin{theorem}\label{thm:main2}
The image of the Levi-Civita embedding of the regularized rotating Kepler problem is not convex for energies close to the critical value of the Jacobi energy.
\end{theorem}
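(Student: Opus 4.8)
The plan is to make the Levi--Civita normal form of the bounded component completely explicit, reduce convexity to a two-dimensional problem by intersecting with a well-chosen plane, and then locate a single point at which the boundary curves the wrong way. Concretely, I would regularize $\{H=c\}$ by multiplying by $|q|$ and apply the Levi--Civita map $q=z^2$, $p=w/(2z)$, which is symplectic up to the time reparametrization. With $\omega=1$ and $L=\mathrm{Im}(\bar q p)$, the relation $\tfrac12|q||p|^2-|q|L-c|q|=1$ becomes, in coordinates $(z,w)\in\C^2$,
\[
\tfrac18\,|w-2iz^3|^2-\tfrac12|z|^6-c|z|^2=1 .
\]
Writing $s=|z|^2$ and $g(s)=s^3+2cs+2$, the $w$-fibre over each $z$ is the circle of radius $2\sqrt{g(s)}$ centred at $2iz^3$, nonempty exactly when $g(s)\ge0$. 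Analyzing $g$ identifies the critical Jacobi value as $c_0=-\tfrac32$, where $g(s)=(s-1)^2(s+2)$; for $c$ slightly below $c_0$ the equation $g=0$ has a simple root $s_1(c)<1$ with $s_1(c)\to1$ as $c\uparrow c_0$, and the bounded component $\Sigma_c$ is the smoothly embedded sphere $\{|z|^2\le s_1(c)\}$ carrying these shrinking circle fibres.

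Let $\Omega_c\subset\C^2$ be the compact region bounded by $\Sigma_c$. If $\Omega_c$ were convex, its intersection with every affine $2$-plane would be convex, so it suffices to exhibit one plane on which convexity fails. I would take $P=\{\mathrm{Im}\,z=0,\ \mathrm{Re}\,w=0\}$ with coordinates $z=x$, $w=iy$; there the inequality reads $|y-2x^3|\le R(x^2)$ with $R(s)=2\sqrt{g(s)}$, so
\[
\Omega_c\cap P=\{(x,y):|x|\le\sqrt{s_1},\ u_-(x)\le y\le u_+(x)\},\qquad u_\pm(x)=2x^3\pm R(x^2).
\]
Convexity of this planar region forces the upper graph $u_+$ to be concave, and hence it is enough to produce a point $x_*\in(0,\sqrt{s_1})$ with $u_+''(x_*)>0$.

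The heart of the matter is the competition in $u_+''(x)=12x+\frac{d^2}{dx^2}R(x^2)$ between the cubic twist $2iz^3$ of the fibre centres (the term $12x$) and the concave contribution of the shrinking fibre radius. For the non-rotating Kepler problem the centres sit at the origin, there is no $z^3$ term, and $\Omega_c$ is a genuine ellipsoid, hence convex; the rotating term is exactly what introduces the twist. I would show the twist wins near $c_0$: differentiating $R(x^2)=2\sqrt{g(x^2)}$ at $c=c_0$ and evaluating at a suitable $x_*\in(0,1)$ (for instance $x_*=\tfrac45$, so $x_*^2<s_1$) gives $u_+''(x_*)\approx 0.29>0$ by a slim but strictly positive margin; since $g(x_*^2)>0$ this quantity depends continuously on $c$, so it stays positive on a left neighbourhood of $c_0$. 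Therefore $\Omega_c\cap P$, and a fortiori $\Omega_c$, is non-convex for all energies just below the critical value.

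The main obstacle is twofold: the careful bookkeeping needed to fix the normal form and, crucially, to certify that the test point lies on the bounded component rather than on the unbounded sheet $\{|z|^2\ge s_2(c)\}$; and the explicit estimate of $\frac{d^2}{dx^2}R(x^2)$, whose margin against $12x$ is narrow. This narrowness is not an accident: the cubic displacement $2iz^3$ only grows large enough to overcome the concavity of the fibre radius once the bounded region reaches out to $|z|^2$ near $1$, which is precisely the degeneration occurring as $c\uparrow c_0$, mirroring the hypothesis ``close to the critical value''.
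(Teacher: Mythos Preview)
Your planar--slicing strategy is sound and, once the algebra is done right, it proves the theorem; but as written the setup has a genuine error. The map $q=z^2,\ p=w/(2z)$ is \emph{not} symplectic (hence not the Levi--Civita embedding): pulling back $\mathrm{Re}(\bar p\,dq)$ gives $\mathrm{Re}\bigl(\tfrac{z}{\bar z}\,\bar w\,dz\bigr)$, which is not a constant multiple of $\mathrm{Re}(\bar w\,dz)$. The correct Levi--Civita transformation has $\bar z$ in the denominator (the paper uses $q=2v^2,\ p=u/\bar v$), and completing the square then places the fibre centre at $2i|z|^2z$, \emph{not} at $2iz^3$. Consequently your global normal form is wrong; since convexity is not preserved under nonlinear coordinate changes, non-convexity of your hypersurface does not by itself imply non-convexity of the Levi--Civita image. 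You are rescued by a lucky accident: on your test plane $P=\{z\in\R\}$ one has $|z|^2z=z^3$, so the two equations coincide there and your curvature computation $u_+''(x_*)>0$ is in fact a computation on the genuine Levi--Civita slice. You should either switch to the symplectic map from the start, or explicitly note this coincidence on $P$.

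The paper's proof is more bare-handed and avoids the fibre picture entirely. It fixes $c=\tfrac32$, writes down the point $(u,v)=\bigl(-ia,\tfrac12\bigr)$ with $a=\tfrac{1+\sqrt5}{4}$ on $\{K_{3/2}=0\}$, solves a linear equation for a tangent vector $(\hat u,\hat v)=\bigl(i(\tfrac{9\sqrt5}{10}-\tfrac32),1\bigr)$, and evaluates the Hessian exactly to get $D^2K\bigl((\hat u,\hat v),(\hat u,\hat v)\bigr)=\tfrac32\bigl(\tfrac{11}{5}-\sqrt5\bigr)<0$; continuity in $c$ finishes it. Note that the paper's test point also has $v$ real and $u$ purely imaginary, i.e.\ it lies in (the analogue of) your plane $P$---so both arguments are probing the same two--dimensional slice, yours via the graph $u_+$ and the paper's via a direct second--order Taylor expansion. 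Your approach buys the geometric explanation (the rotation contributes a cubic twist of the fibre centres that eventually beats the concavity of the fibre radius near the critical energy); the paper's buys a short exact computation with no normal--form bookkeeping.
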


\begin{question}
Is the universal cover of the bounded component of the regularized restricted planar three body problem for energies below the first critical value dynamically convex?
\end{question}

\begin{remark}
Above the first critical value the regularized restricted planar three body problem is not dynamically convex due to the existence of the Lyapunov orbits which have Conley-Zehnder index 2.

In \cite{Albers_Fish_Frauenfelder_Hofer_Koert_Global_surfaces_of_sectio_for_the_3BP} we proved that for large mass ratios and for sufficiently negative energy levels the answer to the above question is "yes." Moreover we proved that for these mass ratios and energy levels the Levi-Civita embedding is actually convex.
Theorem \ref{thm:main} asserts that for mass ratio 0 the answer is ``yes'' again. 
Furthermore we checked numerically whether the Levi-Civita embedding is convex by discretizing the energy hypersurface and testing the tangential Hessian for negative eigenvalues.
These numerical results suggest that the Levi-Civita embedding is convex for most mass ratios. Furthermore, in case convexity fails, the measure of the set of non-convex sample points is very small. Thus, we tend to believe that the answer to the above question concerning dynamical convexity is ``yes'' for all mass ratios.
\end{remark}

{\bf Acknowledgments:} We thank E.~Belbruno, B.~Bramham and H.~Hofer for stimulating discussions.
The research of  P.~Albers, J.~Fish was partially supported by the NSF-grants
DMS-0903856, DMS-0802927. U.~Frauenfelder was partially supported by the Basic Research fund 2010-0007669 and O.~van Koert by the New Faculty Research Grant 0409-20100147 funded by the Korean government.

P.~Albers, J.~Fish, U.~Frauenfelder and O.~van Koert thank the IAS for its hospitality.

\section{Kepler laws}

The Kepler problem is given by the Hamiltonian $E(q,p):T^*(\R^2\setminus\{0\})\cong\big(\R^2\setminus\{0\}\big)\times\R^2\to\R$
\beq
E(q,p):=\tfrac12 |p|^2-\frac{1}{|q|}\;.
\eeq

Since $E$ is invariant under rotations in $\R^2$ around $0$ the angular momentum
\beq
L:=q_1p_2-q_2p_1
\eeq
is a first integral of the motion. For negative energies $E<0$ the solutions of the Hamiltonian equations are either ellipses or collision orbits. The eccentricity $\epsilon$ of an ellipse is given by
\beq\label{eqn:eccentricity}
\epsilon^2=2EL^2+1\;.
\eeq
For $\epsilon=0$ the ellipse is a circle and for $\epsilon\to1$ the ellipse degenerates into a collision orbit. According to Kepler's 3rd law we have the equality
\beq\label{eqn:Kepler_period_energy}
T^2=-\frac{\pi^2}{2E^3}
\eeq
where $T$ denotes the period of the ellipse.

\section{Moser regularization of the (inertial) Kepler problem}\label{sec:Moser_regularization_of_the_inertial_Kepler_problem}

The flow of the Kepler problem is periodic outside the set of collision orbits. However, it is well known that double collisions can be regularized. A nice description of the regularization is given by Moser in \cite{Moser_Regularization_of_Keplers_problem_and_the_averaging_method_on_a_manifold}, which embeds the Kepler flow for negative energies into the geodesic flow on the 2-sphere.

Let us illustrate this with the following example.
Consider the energy level $E=-\tfrac12$ and set
\beq
K(q,p):=|q|\big(E(q,p)+\tfrac12)+1=\tfrac12\big(|p|^2+1)|q|\;.
\eeq
Since $|q|\neq0$ the flows associated to $E$ and $K$ coincide up to reparametrization of the time variable. We point out that under stereo-graphic projection the norm on $T^*S^2$ induced by the round metric becomes $K(q,p)$ but with the following identification: $p$ corresponds to the $S^2$ coordinate and $q$ to the fiber coordinate. We note that the roles of $q$ and $p$ are exchanged: on $T^*S^2$ the $p$ variable is the position variable and $q$ is the momentum (fiber) variable. Indeed, the points in the fiber over the point at infinity correspond precisely to collisions since there the (physical) momentum variables explode.

More generally for energies $E=-k<0$ we set
\beq
K_k(q,p):=|q|\big(E(q,p)+k)+1=\tfrac12\big(|p|^2+2k)|q|\;.
\eeq
This gives again rise to the geodesic flow of the round metric (up to multiplication by a constant) but in disguise. Indeed, if we make the following symplectic change of coordinates: $(q,p)\mapsto(\frac{1}{\sqrt{2k}}q,\sqrt{2k}p)$ we obtain
\beq
K_k(\tfrac{1}{\sqrt{2k}}q,\sqrt{2k}p)=\sqrt{2k}\;K_{\frac12}(q,p)\;.
\eeq
In particular, the regularized energy hypersurfaces coincide with the unit cotangent bundle of $S^2$ which is diffeomorphic to $\RP^3$.

\section{The rotating Kepler problem}

The rotating Kepler problem is the Kepler problem in a rotating coordinate system.  More precisely, we regard the rotating Kepler problem as the limit of the planar circular restricted three body problem in which a lighter primary orbits a heavier primary in a circular clockwise direction of constant unit angular speed, and the mass of the smaller primary tends to zero; we then apply a time dependent change of coordinates which results in the heavier primary being fixed at $0\in \mathbb{C}$ and the lighter (in fact massless) primary being fixed at $1\in \mathbb{C}$.

Note that the angular momentum $L(q,p)=q_1p_2-q_2p_1$ generates counter-clockwise rotation around the origin with constant unit angular speed, and this rotation commutes with the Kepler flow; consequently it can be shown that the Hamiltonian of the rotating Kepler problem is given by
\beq\label{eqn:H=E+L}
H=E+L\;.
\eeq

We fix some conventions. The symplectic form is given by $\om=\sum dp_i\wedge dq_i$ and the Hamiltonian vector field of $L$ is given by $\om(X_L,\cdot)=-dL$.

We write
\beq
H(q,p)=\tfrac12|p_1-q_2|^2+\tfrac12|p_2+q_1|^2+U(q)
\eeq
where
\beq
U(q)=-\frac{1}{|q|}-\tfrac12|q|^2
\eeq
denotes the effective potential. The Hamiltonian $H$ is not of mechanical form, that is, kinetic energy plus potential. Instead $H$ belongs to the class of magnetic Hamiltonians and the Lorentz force corresponds to the Coriolis force induced by the rotating coordinate system. The second term in the effective potential is responsible for the centrifugal force.

We denote by $\pi:T^*(\R^2\setminus\{0\})\to\R^2\setminus\{0\}$ the projection to the position space. Then for each $c\in\R$, the associated Hill's region $\mathcal{H}_c\subset\R^2\setminus\{0\}$ is defined to be
\beq
\mathcal{H}_c:=\pi\big(H^{-1}(-c)\big)\;.
\eeq
We note that
\beq
\mathcal{H}_c=\{q\mid U(q)\leq -c\}\;.
\eeq
The value $c$ is \emph{minus} the Jacobi energy. We use this notation in order to keep in line with tradition: the original definition of the Jacobi integral differs from our choice of Hamiltonian $H$ by a minus sign and a factor $2$.

\begin{figure}[htb]
\includegraphics[width=0.3\textwidth,clip]{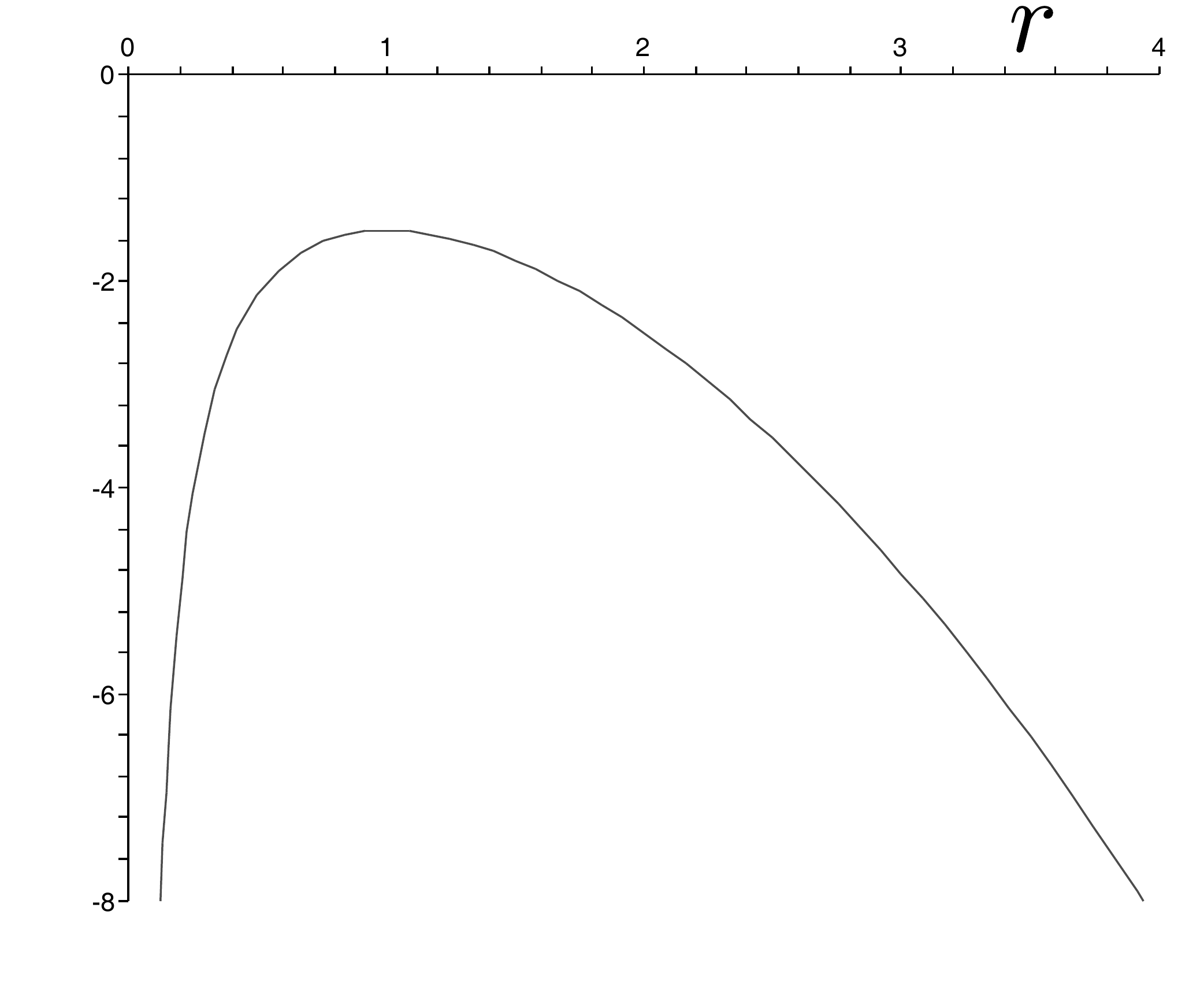}
\caption{Plot of the effective potential $-\frac{1}{r}-\frac{1}{2}r^2$}\label{fig:effective_pot}
\end{figure}
Consider the effective potential $r\mapsto-\frac{1}{r}-\frac{1}{2}r^2$; see Figure~\ref{fig:effective_pot} for its graph.  Since this function attains its unique maximum value of $-\frac32$ at $r=1$, it follows that for all $c>\tfrac32$ Hill's region $\mathcal{H}_c$ is comprised of two connected components: one is bounded and the other is unbounded. If $c\leq\tfrac32$ then Hill's region coincides with $\R^2\setminus\{0\}$. For $c>\tfrac32$ we let
\beq
\mathcal{H}_c^b
\eeq
denote the bounded component of Hill's region $\mathcal{H}_c$. Moreover, we define
\beq
\Sigma_c:=\pi^{-1}\big(\mathcal{H}_c^b\big)\subset H^{-1}(-c)\;.
\eeq
As in the inertial Kepler problem one can apply Moser regularization, see for instance \cite{Albers_Frauenfelder_Koert_Paternain_Liouville_field_for_PCR2BP}. We denote the regularized energy hypersurface by $\Sb_c$. It is again diffeomorphic to $\RP^3$. From now on we only consider the regularized system.

Note that in general, periodic orbits of the inertial Kepler problem will not give rise to periodic orbits of the rotating Kepler problem.  More precisely, if $\gamma:\R\to \mathbb{R}^4$ solves the inertial Kepler problem, and $\Phi_t:\mathbb{R}^4\to\mathbb{R}^4$ denotes the time-dependent change of coordinates from the inertial problem to the rotating problem, then $\alpha(t):=\Phi_t \gamma(t)$ will solve the rotating Kepler problem; however if $\gamma$ is periodic, then in general $\alpha$ will \emph{not} be periodic. Indeed, this is precisely due to the fact that $\Phi_t$ is time-dependent. Moreover, contrary to the inertial Kepler problem, the flow of the rotating Kepler problem is no longer periodic.

Despite all this, there are two cases in which periodic orbits of the inertial problem yield periodic orbits for the rotating problem.  We now specify these cases.

The first is the case of circular orbits.  Recall that an orbit of the rotating Kepler problem is circular if and only if it traces out a circular path in the inertial frame; however, since the coordinate change $\Phi_t$ is a time-dependent rotation in both the $q$ and $p$-plane, we see that $\Phi_t$ takes circular trajectories to circular trajectories, and hence such orbits also trace out circular paths in the rotating frame.  In fact, every circular periodic orbit of the rotating Kepler problem can be constructed from a circular periodic orbit of the inertial Kepler problem, although with different period.

The second case consists of those trajectories which trace out elliptical paths of positive eccentricity in the inertial frame; in this case a $T$-periodic orbit of the inertial problem yields a periodic orbit of the rotating problem if and only if $T$ is a rational multiple of $2\pi$.\footnote{The period in the inertial coordinate system is called \emph{sidereal period}.} Note that this scenario also covers the case that the elliptic orbit in the inertial frame is degenerate\footnote{These are precisely the so-called collision orbits.}, or equivalently has eccentricity $1$.

Observe that the above discussion can be turned around to provide a useful characterization of all periodic orbits of the rotating problem.  More specifically, a $T$-periodic solution of the rotating Kepler problem either traces out a circle in the inertial frame, or else it multiply covers a (possibly degenerate) ellipse of positive eccentricity in the inertial frame.  In the latter case it can be shown that $T$ is then an integer multiple of $2\pi$. In fact, this characterization motivates the following definition.

\begin{definition}
We say a $T$-periodic orbit $\alpha:\mathbb{R}/T\mathbb{Z}\to \mathbb{R}^4$ of the rotating Kepler problem is a $k$-fold covered ellipse\footnote{Here the ellipse is allowed to have all possible eccentricities, including $1$ (the degenerate collision orbits) and $0$ (the circular orbits).} in an $l$-fold covered coordinate system provided the following hold.
\begin{itemize}
\item There exists positive $l\in \mathbb{N}$ such that $T=2\pi l$, and
\item the corresponding trajectory in the inertial coordinate system given by $\gamma(t):=\Phi_t^{-1}\alpha(t)$ is a $k$-fold covered ellipse of the standard Kepler problem.
\end{itemize}
\end{definition}


It is worth mentioning that in the rotating Kepler system, periodic orbits can no longer be interpreted as geodesics of a Riemannian metric on $S^2$ but instead as geodesics of a Finsler metric; further details can be found in \cite{Cieliebak_Frauenfelder_vKoert_Cartan_geometry_of_the_rotating_Kepler_system}.

Since the image of each circular periodic orbit is fixed under the $S^1$-action which rotates the coordinate system, it follows that each circular orbit gives rise to an $S^1$-family of periodic orbits; geometrically this $S^1$-family forms a single circle.  By contrast, ellipses of positive eccentricity in an inertial system can form $T^2$ families of periodic orbits.
Introduce the following notation if this happens;
let $T_{k,l}$ denote the torus comprised of $k$-fold covered ellipses in an $l$-fold covered rotating coordinate system.
Using Delaunay coordinates, one can see that these tori are of Morse-Bott type; see \cite{Barrar_Existence_of_periodic_orbits_of_the_second_kind_in_the_restricted_problem_of_three_bodies} for further details.

These tori play a prominent role in the theory of Kirkwood gaps in the asteroid belt of the Sun-Jupiter system. The first few carry names\footnote{Hilda is the name of the eldest daughter of the astronomer Theodor von Oppolzer.} as follows.


 \begin{center}
\begin{tabular}{c|c|c|c|c}
$T_{2,1}$ &$T_{3,2}$ & $T_{4,3}$ & $T_{3,1}$ & $T_{7,4}$\\
\hline Hekuba& Hilda & Thule & Hestia & Cybele
\end{tabular}
 \end{center}

\section{Circular orbits in the rotating Kepler problem}

Recall that our goal is to compute the Conley-Zehnder indices of all periodic orbits of the rotating Kepler problem for all Jacobi energies beneath the first critical value.  In the previous section, we classified all such periodic orbits either as a circular orbit in an $S^1$ family or as an ``elliptic'' orbit in a $T_{k,l}$-torus family.  The purpose of this section is describe how the families of circular orbits change as we vary the Jacobi energy.  In particular, we show that each such circular orbit has $E$-energy which varies smoothly with $c$.

The circular orbits are, by definition, characterized by the vanishing of their eccentricity. We fix an energy hypersurface $\{H=-c\}$ and consider circular orbits. Combining equations \eqref{eqn:eccentricity} and \eqref{eqn:H=E+L} we obtain the cubic equation
\beq\label{eqn:cubic}
0=2E(-c-E)^2+1\;.
\eeq
The solution set is the union of the two graphs in Figure \ref{fig:energy_Jacobi_diagram}.

\begin{figure}[htb]
\includegraphics[width=0.6\textwidth,clip]{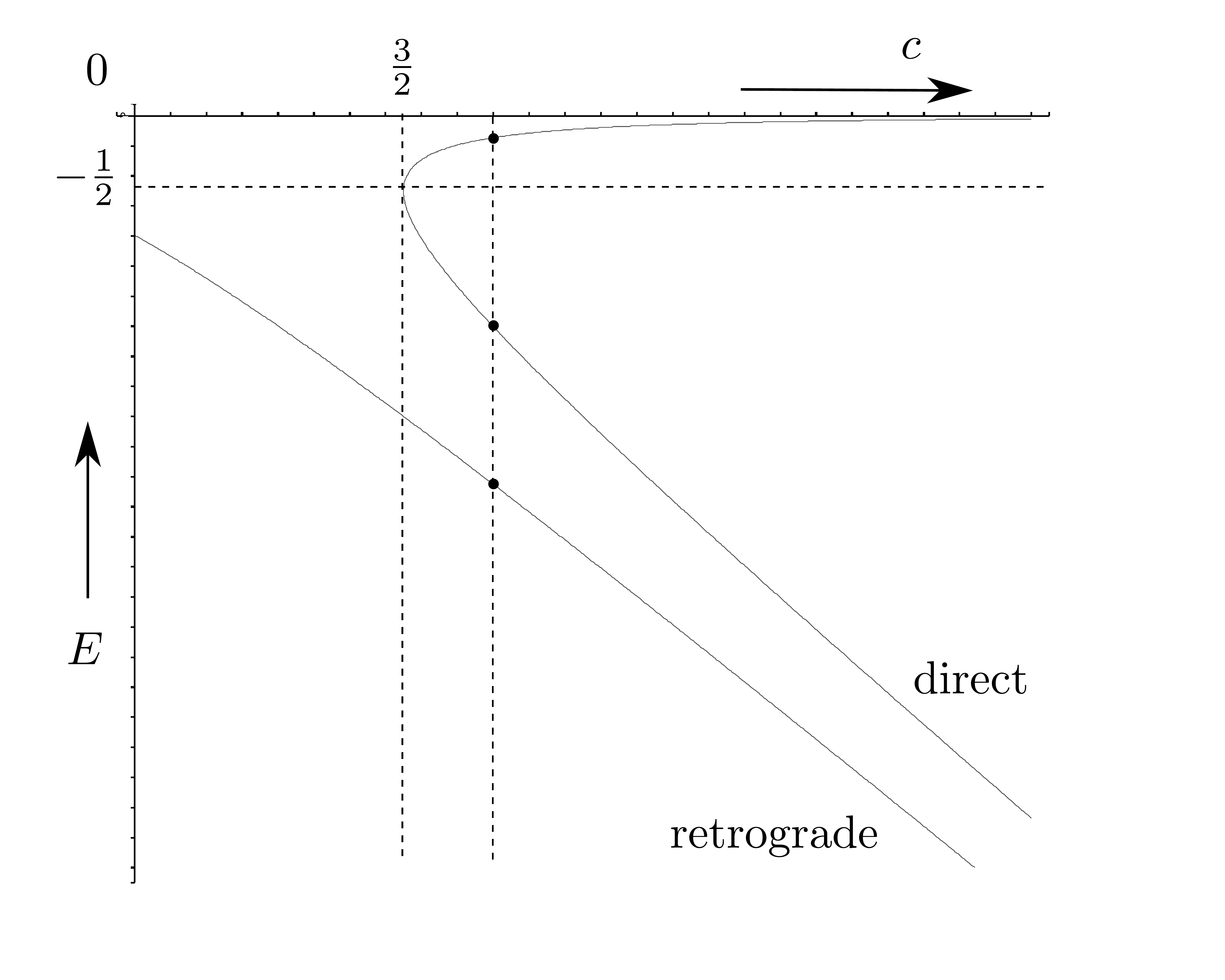}
\caption{The energy-Jacobi diagram}\label{fig:energy_Jacobi_diagram}
\end{figure}

The part of the graph with values $E>-\frac12$ lies in the unbounded component. In particular, for $c>\frac32$ there are two simply covered circular orbits which lie above the bounded component of Hill's region, $\mathcal{H}_c^b$; the one that rotates in the same direction as the coordinate system is rotating is called direct, and the one that rotates in the opposite direction is called retrograde.
In our setup this means that orbits with positive angular momentum $L$ are retrograde and orbits with negative angular momentum are direct.
The third circular orbit lies in the unbounded component of Hill's region.

\section{The life of tori}
In this section we study the behavior of the $T_{k,l}$-torus families of ``elliptic'' orbits as one varies the Jacobi energy.  In particular, we find that unlike the circular orbits, the $E$-energy of the $T_{k,l}$ family does not change as one varies the Jacobi energy $-c$, and furthermore these $T_{k,l}$ families only exist for a finite range of values of $c$. Additionally we shall see that if $T_{k,l}$ is a family of orbits with extremal Jacobi energy then these orbits are in fact circular.  In this way, we shall envision circular orbits as ``giving birth'' to the $T_{k,l}$ families of orbits precisely when the periods of the circular orbits cross multiples of $2\pi$.

Recall that the tori $T_{k,l}$ are obtained from a $k$-fold covered ellipse in a $l$ times rotating coordinate system. We first compute the energy of the ellipse underlying $T_{k,l}$ using Kepler's laws. From the definition of $T_{k,l}$ we obtain the relation
\beq
kT=2\pi l\;.
\eeq
Using $T^2=-\frac{\pi^2}{2E^3}$ we see
\beq
\frac{4\pi^2 l^2}{k^2}=-\frac{\pi^2}{2E^3}
\eeq
and thus
\beq
E_{k,l}=-\frac12\left(\frac{k}{l}\right)^{\frac23}\;.
\eeq
Since we are only interested in energies $E$ below $E_{k,l}<-\frac12$, we shall restrict ourselves from now on to
\beq\label{eqn:k,l_restrictions}
l=1,2,\ldots\quad\text{and}\quad k>l\;.
\eeq

We recall equation \eqref{eqn:cubic} for circular orbits
\beq
0=2E(c+E)^2+1\;.
\eeq
Thus, the possible values of $c$ (minus Jacobi energy) of circular orbits with energy $E_{k,l}$ are
\beq
c^+_{k,l}=-E_{k,l}- \sqrt{\frac{1}{-2E_{k,l}}}\;
\eeq
for the retrograde orbit, and
\beq
c^-_{k,l}=-E_{k,l}+ \sqrt{\frac{1}{-2E_{k,l}}}\;
\eeq
for the direct orbit.
Indeed, as $-c=H=E+L$, we see that the retrograde orbit has positive angular moment $\sqrt{\frac{1}{-2E_{k,l}}}$, whereas the corresponding direct orbit has negative angular momentum $-\sqrt{\frac{1}{-2E_{k,l}}}$.

The synodical periods, i.e.~the periods in the rotating coordinate system, of the circular orbits are
\beq
\label{eq:synodical}
T_r^\pm=\frac{2\pi}{(-2E)^{\frac32}\pm1}
\eeq
where $T_r^+$ corresponds to retrograde circular orbits, and $T_r^-$ corresponds to direct circular orbits.
Note that retrograde orbits have \emph{smaller} period than the direct orbits.

We parametrize the lifetime of the the tori $T_{k,l}$ by decreasing values of $c$ (that is, by increasing the values of the Jacobi energy $H=-c$), see Figure \ref{fig:life_of_tori}. In this way a $T_{k,l}$-torus family of periodic orbits is born out of a multiple cover of a direct circular orbit with Jacobi energy $-c=-c^-_{k,l}$. At this energy, the ellipses in the torus are direct; that is, they have negative angular momentum. This means that the direct orbit is $(k-l)$-fold covered, and this
can be seen as follows. Suppose at Jacobi energy $-c=-c^-_{k,l}$ a torus is born out of a $N^-$-fold cover of a direct circular orbit.
In particular, the period of the $N^-$-fold cover needs to match those of the torus orbits that are born,
\bea
N^- T_r^-&=2\pi l\;.
\eea
We know that the torus $T_{k,l}$ has energy $E_{k,l}=-\frac{1}{2}\left( \frac{k}{l} \right)^{\frac23}$. Furthermore $ T_r^-=\frac{2\pi}{(-2E)^{3/2}-1}$, so we find
$$
N^-=l \Big(\frac{k}{l}-1\Big)=k-l.
$$

As $c$ decreases, the ellipse becomes more and more eccentric until the eccentricity equals $1$, whence the orbit is a collision orbit. If $c$ is decreased further the eccentricity starts to decrease and the ellipse now is retrograde, i.e.~rotates in the opposite direction. Finally, when $c=c^+_{k,l}$ the eccentricity becomes $0$, and the orbits dies in the arms of the $(k+l)$-fold covered retrograde circular orbit.

The lives of Hekuba, Hilda, Thule, Hestia, Cybele, and $T_{7,2}$ are shown in Figure \ref{fig:life_of_tori}.

\begin{figure}[htb]
\includegraphics[width=0.6\textwidth,clip]{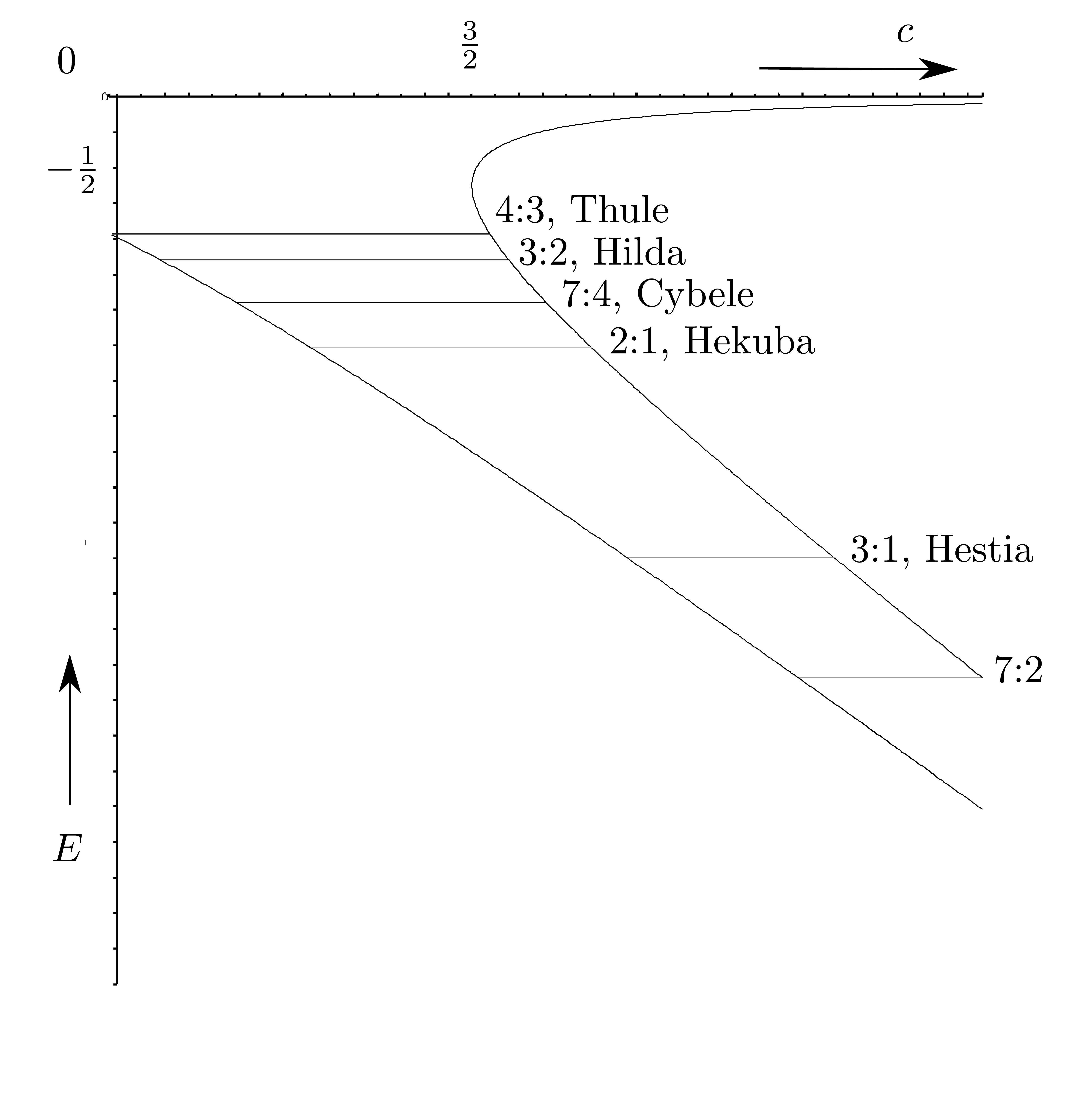}
\caption{The life of tori}\label{fig:life_of_tori}
\end{figure}

\section{Main argument}
In this section we provide the proof of Theorem \ref{thm:main}. However before doing so we first present certain key concepts and then provide a relevant example which illustrates the main proof technique.

The first important idea for the proof is that we shall not consider periodic orbits for a \emph{fixed} value of Jacobi energy, but rather we consider families of orbits that arise from varying $c$ as well.  In this way, any two non-degenerate orbits (of possibly different Jacobi energies) which are connected via a path of non-degenerate orbits must have the same Conley-Zehnder index.  As it turns out, (see Proposition \ref{prop:index_circular_orbit} below) the only orbits which fail to be Morse-Bott non-degenerate are those at bifurcation points; that is, only at those orbits which are both circular and are in a $T_{k,l}$-torus family.

The second key point is that the energy surfaces considered here give rise to Finsler metrics on $S^2$, and that periodic orbits of the rotating Kepler problem are in fact critical points of the energy functional associated to this Finsler metric.  In other words, periodic orbits can be regarded as Finsler-geodesics. Consequently we can assign a Morse index to each periodic orbit, and more importantly, the Conley-Zehnder index considered here and this Morse index agree; see for instance \cite{Duistermaat_On_the_Morse_index_in_variational_calculus, Weber_Peridodic_closed_geodesics_are_periodic_orbits:index_and_transversality, Abbondandolo_On_the_Morse_index_of_Lagrangian_systems}.

To see the third key point, we first recall that the Morse index of a degenerate orbit of Morse-Bott type is defined as the number of negative eigenvalues of the energy functional at that orbit. A consequence of this definition is that the Morse index cannot decrease after a small perturbation; this is the third key point, and it is a fact which we will exploit in our proof.

\subsection{The births of Hekuba, Hilda, and subsequent siblings}
With the key ingredients established, we now move on to our example. We begin by considering the direct circular orbits which wind around the origin (in the rotating coordinate system) precisely once.  As mentioned at the beginning of this section, these orbits are Morse-Bott non-degenerate whenever they are not also a $T_{k,l}$-type orbit.  Observe that in our example the winding condition guarantees that the circular orbits we are considering cannot be be of $T_{k,l}$-type unless $k=l+1$. We conclude that these circular orbits are only degenerate when their Kepler energy is precisely
\begin{equation}\label{eq:windingOneEnergies}
E_{k,k-1}=-\frac12 \left(\frac{k}{k-1}\right)^{\frac{2}{3}}.
\end{equation}
Observe that these Kepler energies accumulate at $-\frac12$, and are minimal at $E_{2,1}$.  Consequently, the circular orbits we are considering are non-degenerate whenever their Kepler energy is less than $E_{2,1}=-\frac12(2)^{\frac{2}{3}}$, or equivalently whenever $c>c_{2,1}^-\approx 1.59$. Furthermore, a direct computation (specifically Proposition \ref{prop:index_circular_orbit} below) shows that for very large values of $c$ (i.e. very negative Kepler energy) such orbits have Conley-Zehnder index equal to $3$.

We now consider what happens when one follows these circular orbits from very large values of $c$ to smaller values of $c$ (or equivalently from very negative Kepler energies to less negative Kepler energies).  Indeed, in this case the Conley-Zehnder index remains $3$ until $c=c_{2,1}^-$ (or equivalently at $E=E_{2,1}$) at which point two things happen.  First, at this energy level these simple direct circular orbits give birth to the Hekuba (i.e. $T_{2,1}$) orbits, and second, as $c$ decreases to just below $c_{2,1}^-$ the Morse index (and hence the Conley-Zehnder index) increases from $3$ to $5$.  This latter point is proved in Proposition \ref{prop:index_circular_orbit}.

To compute the Conley-Zehnder index of the Hekuba orbits, we first make use of the fact that all non-circular orbits of the rotating Kepler problem are Morse-Bott non-degenerate\footnote{Recall the Delaunay coordinates and \cite{Barrar_Existence_of_periodic_orbits_of_the_second_kind_in_the_restricted_problem_of_three_bodies}.} so it is sufficient to determine the Conley-Zehnder index of just one Hekuba orbit.  To that end, we make use of the fact that our periodic orbits are critical points of the energy functional associated to a Finsler metric, and that the Conley-Zehnder index will agree with the associated Morse index.  It then follows by local invariance of Morse homology that the Morse index (and hence the Conley-Zehnder index) of every Hekuba orbit is $3$.  
  
If we continue to follow the branch of circular orbits (more specifically, the direct circular orbits which wind around the origin precisely once) through decreasing values of $c$, then we find that the Conley-Zehnder index remains $5$ until we reach the energy level $c=c_{3,2}^-$, at which point we have another bifurcation. At the energy level $c_{3,2}^-$, the direct circular orbits give birth to the Hilda orbits (i.e. the $T_{3,2}$ orbits), and for slightly smaller values of $c$ the Conley-Zehnder index jumps from $5$ to $7$; again by local invariance of Morse-homology and Morse-Bott non-degeneracy of the non-circular orbits we find that the Conley-Zehnder index of every Hilda orbit is $5$.

One can now continue this process, namely decreasing the value of $c$ as close as we like to $\frac{3}{2}$, and each time the value of $c$ crosses one of the values $\{c_{k,k-1}^-\}_{k>1}$ the Conley-Zehnder index increases by $2$ and an additional family of $T_{k,k-1}$-siblings is born.  The Morse-Bott non-degeneracy of non-circular orbits and invariance of local Morse-homology determines the Conley-Zehnder index of all such $T_{k,k-1}$-type orbits.

The above argument essentially computes the Conley-Zehnder indices for all $T_{k,k-1}$-type orbits.  Since Proposition \ref{prop:index_circular_orbit} below computes the Conley-Zehnder indices for all circular orbits, it will be useful to compute the indices for the more general $T_{k,l}$-type orbits.  To that end, we first observe that each $T_{k,l}$-type orbit is born out of a circular direct orbit which winds around the origin precisely $(k-l)$ times, or equivalently a $(k-l)$-covered circular orbit of the rotating Kepler problem.  Again making use of Proposition \ref{prop:index_circular_orbit}, it follows that for very large $c$ the $(k-l)$-fold covered direct circular orbit has Morse index $2(k-l)+1$. At each birth the index increases by $2$. In particular, before giving birth to $T_{k,l}$ the $(k-l)$-fold covered direct circular orbit has Morse index
\beq
2(k-l)+1+2(l-1)=2k-1\;.
\eeq
After the birth of $T_{k,l}$ the torus $T_{k,l}$ acquires this Conley-Zehnder index by invariance of local Morse homology. This proves the following claim.
\begin{lemma}\label{lem:CZofTkl}
Assume $k,l\in \mathbb{N}$ with $k>l\geq 1$.  Then the Conley-Zehnder index of each $T_{k,l}$-type orbit is equal to $2k-1$.
\end{lemma}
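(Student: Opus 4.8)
The plan is to reduce everything to the already-analyzed circular orbits and then transport the index across each bifurcation. First I would invoke the two structural facts recorded above: that the Conley--Zehnder index of a periodic orbit agrees with the Morse index of the corresponding Finsler geodesic (via \cite{Duistermaat_On_the_Morse_index_in_variational_calculus, Weber_Peridodic_closed_geodesics_are_periodic_orbits:index_and_transversality, Abbondandolo_On_the_Morse_index_of_Lagrangian_systems}), and that every non-circular periodic orbit, in particular every orbit lying on a torus $T_{k,l}$, is Morse--Bott non-degenerate. The latter immediately reduces the claim to computing the Morse index of a \emph{single} representative orbit on $T_{k,l}$, since the Morse index is locally constant along a connected Morse--Bott critical manifold.

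Next I would set up the deformation in the Jacobi parameter $c$. Fix the covering number $j:=k-l\geq 1$ and follow the $j$-fold covered direct circular orbit as $c$ decreases from very large values. By Proposition \ref{prop:index_circular_orbit}, for $c$ sufficiently large (very negative Kepler energy) this orbit is non-degenerate with Morse index $2(k-l)+1=2j+1$, and the only values of $c$ at which it degenerates are the bifurcation values $c^-_{j+l',l'}$, $l'=1,2,\dots$, at which the torus $T_{j+l',l'}$ is born; at each such crossing the Morse index jumps up by exactly $2$, again by Proposition \ref{prop:index_circular_orbit}. A short monotonicity computation, writing $c^-_{k',l'}=-E_{k',l'}+(-2E_{k',l'})^{-1/2}$ with $E_{k',l'}=-\tfrac12(k'/l')^{2/3}$, shows that for fixed $j$ these bifurcation values are strictly decreasing in $l'$, so that as $c$ decreases the births occur in the order $l'=1,2,\dots$. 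Hence exactly $l-1$ births precede that of $T_{k,l}$, and just above $c=c^-_{k,l}$ the $j$-fold covered direct circular orbit carries Morse index
\begin{equation}
2(k-l)+1+2(l-1)=2k-1.
\end{equation}

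Finally I would transfer this index onto the torus. At $c=c^-_{k,l}$ the circular orbit becomes Morse--Bott degenerate and $T_{k,l}$ emanates from it; for $c$ just below this value the circular orbit has jumped to index $2k+1$ while the newly born $T_{k,l}$ should inherit the old value $2k-1$. The mechanism is invariance of local Morse homology: in a small neighborhood of the bifurcation the local Morse homology is unchanged, which forces the Morse--Bott family $T_{k,l}$ to carry precisely the index $2k-1$ that the circular orbit has shed. Together with the reduction above, this yields Conley--Zehnder index $2k-1$ for every orbit on $T_{k,l}$.

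The step I expect to require the most care is this last one: making precise that $T_{k,l}$ inherits index $2k-1$ via invariance of local Morse homology. One must check that the bifurcation at $c^-_{k,l}$ is genuinely of split type---an index-$(2k-1)$ critical orbit degenerating into an index-$(2k+1)$ critical orbit together with the Morse--Bott torus---and that lower semicontinuity of the Morse index combined with the local homological bookkeeping pins the torus index down \emph{uniquely} rather than merely bounding it. Everything else is the monotonicity bookkeeping above together with the input of Proposition \ref{prop:index_circular_orbit}.
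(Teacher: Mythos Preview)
Your argument is essentially identical to the paper's: you follow the $(k-l)$-fold covered direct circular orbit from large $c$, invoke Proposition~\ref{prop:index_circular_orbit} for the starting index $2(k-l)+1$ and the $+2$ jumps at the successive births of $T_{k-l+l',\,l'}$, and then hand off the index $2k-1$ to $T_{k,l}$ via invariance of local Morse homology. The only addition is your explicit monotonicity check that the $c^-_{k-l+l',\,l'}$ are strictly decreasing in $l'$, which the paper uses tacitly; your flagged concern about the local Morse homology step is exactly the point the paper also leaves at the level of an appeal to invariance.
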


\subsection{Proof of Theorem \ref{thm:main}}
We need to show that the Conley-Zehnder indices of contractible periodic orbits are greater or equal to $3$. 

\noindent{\bf Claim:}\emph{The circular orbits are contractible if and only if they are evenly-covered.}
Indeed, since every even cover of a loop in $\R P^3$ lifts to a loop $S^3$, we see that all evenly-covered orbits are contractible.
To see that odd covers of simple circular orbits are not contractible, observe that it suffices to show that a simply covered circular is not contractible.
This is the case because one can construct a homotopy of circular orbits by varying the energy level $c$. For $c\to \infty$ a circular orbit in the rotating Kepler problem becomes close to a simple orbit of the geodesic flow on $S^2$: such orbits are not contractible.

Because these circular orbits are evenly covered, it follows from Proposition \ref{prop:index_circular_orbit} that their Conley-Zehnder indices are at least $3$. Furthermore it follows from Lemma \ref{lem:CZofTkl} that the Conley-Zehnder indices of each $T_{k,l}$-type orbit (contractible or not) is at least $3$.  This completes the proof of Theorem \ref{thm:main}.  We finish this section with an informative corollary.

\begin{corollary}
For each $c>\frac{3}{2}$ the doubly covered retrograde orbit is the unique contractible periodic orbit of Conley-Zehnder index $3$.
\end{corollary}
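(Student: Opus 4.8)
The plan is to combine the classification of periodic orbits into circular and $T_{k,l}$-type families with the index computations of Proposition~\ref{prop:index_circular_orbit} and Lemma~\ref{lem:CZofTkl}, and with the contractibility criterion for circular orbits established in the proof of Theorem~\ref{thm:main}. First I would record existence: the doubly covered retrograde circular orbit is an even cover, hence contractible by that criterion, and Proposition~\ref{prop:index_circular_orbit} assigns it Conley-Zehnder index $3$; thus it is a contractible orbit of index $3$. The remaining, and main, task is uniqueness, which I would split according to the two types of orbits.

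For the circular orbits I would use that a circular orbit is contractible precisely when it is evenly covered, so it suffices to examine the even covers. The key observation here is that the doubly covered retrograde orbit never coincides with a $T_{k,l}$-type orbit: a torus $T_{k,l}$ is born out of the $(k-l)$-fold covered direct orbit and dies into the $(k+l)$-fold covered retrograde orbit, and $k+l=2$ is impossible for $k>l\geq 1$. Consequently this orbit is Morse-Bott non-degenerate for every $c>\tfrac32$, its index is locally constant in $c$, and so equals its value $3$ in the geodesic limit $c\to\infty$. By contrast, in that limit the $m$-fold covered direct orbit has index $2m+1$, so every evenly covered direct orbit has index at least $5$, while the $2m$-fold covered retrograde orbit with $m\geq 2$ has index at least $7$; since the Morse index can only increase as $c$ decreases across bifurcations, none of these ever equals $3$. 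Hence the doubly covered retrograde orbit is the unique evenly covered circular orbit of index $3$.

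For the $T_{k,l}$-type orbits, Lemma~\ref{lem:CZofTkl} gives index $2k-1$, which equals $3$ only when $k=2$; together with $k>l\geq 1$ this forces $l=1$, i.e.\ the Hekuba family $T_{2,1}$. It then remains to show that $T_{2,1}$ is \emph{not} contractible. For this I would argue that over its lifetime the torus $T_{k,l}$ degenerates at its birth to the $(k-l)$-fold covered direct circular orbit, so the orbits of $T_{k,l}$ lie in the same free homotopy class as that circular orbit; by the contractibility criterion they are therefore contractible if and only if $k-l$ is even (equivalently $k\equiv l \bmod 2$, which is consistent with the death into the $(k+l)$-fold covered retrograde orbit). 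Since $T_{2,1}$ has $k-l=1$, it is non-contractible and contributes no contractible orbit of index $3$. Combining the two cases yields the corollary.

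I expect the main obstacle to be the homotopy bookkeeping rather than any new computation: precisely justifying that the free homotopy class of a $T_{k,l}$-orbit agrees with that of the circular orbit it is born from, so that the parity criterion applies and pins down $T_{2,1}$ as non-contractible, and, in parallel, verifying that the doubly covered retrograde orbit genuinely avoids all torus families so that its index remains exactly $3$ for every $c>\tfrac32$. Both points rest on the ``life of tori'' picture of the preceding sections, and they are exactly what converts the index data of Proposition~\ref{prop:index_circular_orbit} and Lemma~\ref{lem:CZofTkl} into the stated uniqueness.
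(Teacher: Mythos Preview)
Your overall strategy and the treatment of the $T_{k,l}$-type orbits are essentially the same as the paper's: contractibility of a $T_{k,l}$-orbit is governed by the parity of $k-l$, and combining this with Lemma~\ref{lem:CZofTkl} forces any contractible $T_{k,l}$ to have $k\geq 3$ and hence index $\geq 5$. Your contrapositive phrasing (index $3\Rightarrow k=2,\,l=1\Rightarrow k-l$ odd) is logically equivalent.

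There is, however, a genuine error in your circular-orbit argument. The claim that ``the Morse index can only increase as $c$ decreases across bifurcations'' is correct for direct orbits (tori are \emph{born} from them, and this is the case analyzed earlier in the paper), but it is \emph{false} for retrograde orbits: tori \emph{die} into multiple covers of the retrograde orbit, and Proposition~\ref{prop:index_circular_orbit} shows that the index of the $N$-fold retrograde orbit \emph{decreases} as $c$ decreases. Concretely, for the $4$-fold retrograde orbit the index drops from $7$ to $5$ as $(-2E)^{3/2}$ crosses $3$, so your lower bound ``$\geq 7$'' does not persist. The conclusion you want still holds, but for a different reason: directly from the formula in Proposition~\ref{prop:index_circular_orbit} one checks that for even $N$ the bound $\frac{N(-2E)^{3/2}}{(-2E)^{3/2}+1}>N/2$ whenever $(-2E)^{3/2}>1$, so the $N$-fold retrograde orbit has index $\geq N+1\geq 5$ for all even $N\geq 4$. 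This is precisely how the paper proceeds: it simply invokes Proposition~\ref{prop:index_circular_orbit} to conclude that the doubly covered retrograde orbit is the unique contractible circular orbit of index $3$, without any monotonicity argument. Replace your monotonicity claim for retrograde orbits by this direct application of the formula and the proof goes through.
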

\begin{proof}
As mentioned previously, the only contractible orbits are those which are evenly covered.  Observe that Proposition \ref{prop:index_circular_orbit} guarantees that the doubly covered circular retrograde orbits are the unique contractible \emph{circular} orbits with Conley-Zehnder index $3$.  Recall that the covering number of $T_{k,l}$-type orbit is given by $(k-l)$, and the condition that $k>l\geq 1$ with $k,l\in \mathbb{N}$ guarantees that if $(k-l)$ is even, then $k\geq 3$.  It then follows from Lemma \ref{lem:CZofTkl} that the Conley-Zehnder index of any contractible $T_{k,l}$-type orbit is at least $5$.
\end{proof}

\section{Proof of Theorem \ref{thm:main2}}

We recall that the Levi-Civita coordinates are given by
$q=2v^2$ and $p=\frac{u}{\bar{v}}$ in \cite{Levi_Civita}.  These coordinates define  a 2:1-map, which is symplectic up to a factor $4$.  Indeed, $\Re(dq\wedge d\bar{p})=4\Re(dv\wedge d\bar{u})$. Transforming and regularizing the Hamiltonian function $H(q,p)$ from equation \eqref{eqn:H=E+L} at energy $-c$ leads to
\beq
K_{c}(u,v):=|v|^2\big(H(u,v)+c\big)=\frac12|u|^2+c|v|^2+2|v|^2\langle u,iv\rangle -\tfrac12\;.
\eeq
A component of the energy hypersurface $H^{-1}(c)$  lifts to a compact component $\Sigma_{c}$ of the energy hypersurface $K_{c}^{-1}(0)$ which is diffeomorphic to $S^3\subset\C^2$.

Using complex notation the gradient and Hessian of $K_c$ are given by the following.
\bea
DK_c(u,v)(\uh,\vh)=&\langle u,\uh\rangle +2c\langle v,\vh\rangle +4\langle v,\vh\rangle \langle u,iv\rangle \\
&+2|v|^2\langle \uh,iv\rangle +2|v|^2\langle u,i\vh\rangle 
\eea

\bea
D^2K_c(u,v)((\uh,\vh),(\uh,\vh))=&|\uh|^2+2c|\vh|^2+4\langle u,iv\rangle |\vh|^2\\
&+8\langle v,\vh\rangle \langle u,i\vh\rangle +8\langle v,\vh\rangle \langle \uh,iv\rangle \\
&+4|v|^2\langle \uh,i\vh\rangle 
\eea

We fix throughout the remaining part the value of $c$ such that it corresponds to the critical value of the Jacobi energy: $c=\tfrac32$ and set $K:=K_{3/2}$. For the point $(u,v)=(-ia,\tfrac12)$ to lie on the energy hypersurface
\beq
\{K(u,v)=\tfrac12|u|^2+\tfrac32|v|^2+2|v|^2\langle u,iv\rangle -\tfrac12=0\}
\eeq
we derive for $a\in\R_{>0}$
\bea
0=K(-ia,\tfrac12)&=\tfrac12a^2+\tfrac32\tfrac14-2\tfrac14a\tfrac12-\tfrac12\\
&=\tfrac12a^2-\tfrac14a-\tfrac18\\
&=\tfrac12(a^2-\tfrac12a-\tfrac14)\;.
\eea
We choose the zero given by
\beq
a=\tfrac14+\sqrt{\tfrac{1}{16}+\tfrac14}=\tfrac{1+\sqrt{5}}{4}\approx0.80902
\eeq
and note that
\beq
4a-1=\sqrt{5}\;.
\eeq
Next we fix a vector $(\uh,\vh)\in\C^2$ with $\vh=1$ and $\uh\in i\R$ such that
\bea
0=DK(-ia,\tfrac12)(\uh,\vh)&=\langle u,\uh\rangle +3\langle v,\vh\rangle +4\langle v,\vh\rangle \langle u,iv\rangle +2|v|^2\langle \uh,iv\rangle +2|v|^2\langle u,i\vh\rangle \\
&=-a\uh_2+\tfrac32+4\tfrac12(-\tfrac12a)+2\tfrac14\tfrac12\uh_2+2\tfrac14(-a)\\
&=(\tfrac14-a)\uh_2+\tfrac32(1-a)
\eea
and conclude
\beq
\uh_2=\frac{\tfrac32(1-a)}{a-\tfrac14}=\frac{6(1-a)}{4a-1}=\frac{9\sqrt{5}}{10}-\frac32\approx0.51246\;.
\eeq
We observe
\bea
\uh_2^2+3\uh_2&=\left(\frac{9\sqrt{5}}{10}-\frac32\right)\left(\frac{9\sqrt{5}}{10}-\frac32+3\right)=\left(\frac{9\sqrt{5}}{10}-\frac32\right)\left(\frac{9\sqrt{5}}{10}+\frac32\right)\\
&=\frac{81*5}{100}-\frac{9}{4}=\frac{405}{100}-\frac{225}{100}=\frac{180}{100}\\
&=\frac95\;.
\eea
Now we compute
\bea
D^2K(u,v)((\uh,\vh),(\uh,\vh))=&|\uh|^2+3|\vh|^2+4\langle u,iv\rangle |\vh|^2\\
&+8\langle v,\vh\rangle \langle u,i\vh\rangle +8\langle v,\vh\rangle \langle \uh,iv\rangle \\
&+4|v|^2\langle \uh,i\vh\rangle \\
&=\uh_2^2+3-4\tfrac12a\\
&+8\tfrac12(-a)+8\tfrac12(\uh_2\tfrac12)\\
&+4\tfrac14\uh_2\\
&=\uh_2^2+3\uh_2+3-6a\\
&=\tfrac95+3-6\tfrac{1+\sqrt{5}}{4}\\
&=\tfrac{3}{2}\left(\tfrac{11}{5}-\sqrt{5}\right)\;.
\eea
Since
\beq
\left(\tfrac{11}{5}-\sqrt{5}\right)\left(\tfrac{11}{5}+\sqrt{5}\right)=\tfrac{121}{25}-\tfrac{125}{25}<0
\eeq
we conclude that for
\beq
(u,v)=(\tfrac{1+\sqrt{5}}{4i},\tfrac12)\quad\text{and}\quad(\uh,\vh)=\left(i\left(\frac{9\sqrt{5}}{10}-\frac32\right),1\right)
\eeq
we have
\beq
D^2K(u,v)((\uh,\vh),(\uh,\vh))<0\;.
\eeq
In particular, since the Hessian on a tangential direction of $\{K=0\}$ is negative the energy hypersurface $\{K=0\}$ is not convex. By continuity the same remains true for values of $c$ slightly less that $\tfrac32$. This proves Theorem \ref{thm:main2}.

\appendix

\section{The Maslov and Conley-Zehnder index}

\subsection{Definition of a Maslov index using a crossing form}
Here we shall work with the Robbin-Salamon definition of the Maslov index, see \cite{Robbin_Salamon_Maslov_index_for_paths}.

Let $\omega_0$ denote the standard symplectic form on $\R^{2n}$ given by
$$
\omega_0=dx \w dy.
$$
\begin{definition}
Let $\psi:[0,T]\to Sp(2n)$ be a path of symplectic matrices.
We call a point $t\in [0,T]$ a {\bf crossing} if $\det (\psi(t)-\id)=0$.
For a crossing $t$ we define the crossing form as the following quadratic form.
Let $V_t=\ker (\psi(t)-\id)$ and define for $v\in V_t$
$$
Q(v,v):=\omega_0(v,\dot \psi(t) v).
$$
The quadratic form $Q$ is called the {\bf crossing form}.
\end{definition}

Let us now define the Maslov index for symplectic paths in the following steps.
Take a path of symplectic matrices $\psi:[0,T]\to Sp(2n)$ and suppose that all crossings are isolated.
Suppose furthermore that all crossings are non-degenerate, i.e.~the crossing form $Q_t$ at the crossing $t$ is non-degenerate as a quadratic form.
Then we define the Maslov index of $\psi$ as
$$
\mu(\psi)=\frac{1}{2} \sgn Q_0+\sum_{t\in (0,T) \text{ crossing} } \sgn Q_t +\frac{1}{2} \sgn Q_T
$$
Here $\sgn$ denotes the signature of a quadratic form.
For $*=0$ or $T$, $\sgn Q_* = 0$ if $*$ is not a crossing.

According to Robbin and Salamon, $\mu(\psi)$ is invariant under homotopies of the path $\psi$ with fixed endpoints.
For a general path of symplectic matrices $\psi:[0,T]\to Sp(2n)$, we choose a perturbation $\tilde \psi$ of $\psi$ while fixing the endpoints, and we define
$$
\mu(\psi):=\mu(\tilde \psi).$$
This is well defined according to Robbin and Salamon, \cite{Robbin_Salamon_Maslov_index_for_paths}.

To define the Conley-Zehnder index of a Reeb orbit $\gamma$, we choose a spanning disk $D_\gamma$ for $\gamma$ and trivialize the contact structure $\xi$ over $D_\gamma$. The linearized flow along $\gamma$ with respect to that trivialization then gives rise to a path of symplectic matrices, $\psi(t):=TFl^R_t(x)|_{\xi}$.
Then {\bf Conley-Zehnder index} of $\gamma$ is given by
$$
\mu_{CZ}(\gamma):=\mu(\psi).
$$

\begin{remark}
Note that this index differs from the Conley-Zehnder index defined in \cite{HWZ_the_dynamics_on_three_dimensional_strictly_conve_eneergy_surfaces}.
For non-degenerate orbits they coincide, though. More precisely, in the definition of Robbin-Salamon the Conley-Zehnder index is shifted by adding half of the nullity of the periodic orbit. The latter is by definition the dimension of the kernel of the Hessian minus $1$. Subtracting $1$ removes the always present degeneracy due to the autonomous character of the Hamiltonian system.
\end{remark}

\section{Trivialization of star-shaped contact forms on $T^*S^2$}
Consider $T^*S^2\subset \R^3\times \R^3$ with coordinates $(\xi,\eta)$.
Use $\xi$ to denote the base point in $S^2$, and let $\eta$ denote the fiber coordinate. Hence
$$
\xi^2=1,\quad
\xi \cdot \eta=0.
$$
In these coordinates, the canonical $1$-form is given by $\lambda=\eta d\xi$.
Let $K:T^*S^2\to \R$ be a fiberwise star-shaped Hamiltonian, i.e.~$\eta \partial_\eta K>0$.
We claim the contact structure associated with the kernel of $\lambda$ on a regular level set of $K$ admits a global trivialization.
Indeed,
$$
X_1=(\xi\times \eta-\frac{(\xi\times \eta)\cdot \partial_\eta K}{\eta\cdot \partial_\eta K } \eta) \cdot \partial_\eta
,\quad
X_2=-\frac{(\xi\times \eta) \cdot \partial_\eta K }{\eta\cdot \partial_\eta K} \eta\cdot \partial_\eta+ (\xi\times \eta)\cdot \partial_\xi.
$$
lie in the kernel of $\lambda$ and $dK$, and since
$$
d\lambda(X_1,X_2)=|\xi \times \eta|^2\neq 0,
$$
we see that these vectors are linearly independent, so they form a symplectic basis after normalization.
We can define a complex structure $J$ by
$$
JX_1=X_2.
$$

\subsection{Trivialization after stereo-graphic projection}
\label{sec:trivialization_after_stereo_proj}
We denote the stereo-graphic projection by $\Pi$, and its the tangent map by $T\Pi$.
Together with the inverse $\Pi^{-1}$, we find
\begin{align*}
&T\Pi \eta \partial_\eta=q\partial_q, \quad T\Pi (\xi\times \eta) \partial_\eta=J_0q,\\
&T\Pi (\xi\times \eta) \partial_\xi=\frac{1}{4}(p^2+1)^2(J_0q)\partial_p+\frac{1}{2}(p^2+1)|q|(J_0p)\partial_q.
\end{align*}
Here $q=(q_1,q_2)$, $J_0q=(q_2,-q_1)$.
From this it follows that $T\Pi X_1$ is a vector that has only components in the $\partial_q$ direction.

\subsection{Kepler Hamiltonian in polar coordinates and linearized flow}
The Hamiltonian for the rotating Kepler problem with angular momentum $a$ is given by
$$
H_a(q,p)=\frac{1}{2}|p|^2+a(q_1p_2-q_2p_1)-\frac{1}{|q|}=E+aL.
$$
We use polar coordinates $q_1=x \cos y$, and $q_2=x \sin y$, which induces a coordinate change on the cotangent bundle.
The latter can be computed using the corresponding canonical $1$-forms $p_1 dq_1 +p_2 dq_2=rdx+tdy$.
We find $p_1=r\cos y -\frac{t}{x}\sin y$  and $p_2=r\sin y +\frac{t}{x}\cos y$.

The angular momentum $L$ is now given by $L=t$ and the transformed Hamiltonian is
$$
H_a(x,y,r,t)=\frac{1}{2}\left( r^2+\frac{t^2}{x^2} \right)
-\frac{1}{x}+at.
$$
The associated Hamilton vector field has the form
$$
X_{H_a}=\frac{t^2-x}{x^3}\partial_r+r\partial_x+\left( \frac{t}{x^2}+a \right) \partial_y.
$$

\subsubsection{Circular orbits}
\label{sec:circular_orbits}
We shall now look for circular orbits.
Circular orbits have constant $x$, hence we need to impose $r=0$.
In particular, $r$ is constant, so it follows that $t^2=x$.
Hence we find the solutions
$$
\left(
\begin{array}{c}
r\\
t\\
x\\
y
\end{array}
\right)
(s)=
\left(
\begin{array}{c}
0\\
\pm \sqrt{x_0}\\
x_0\\
\left(\frac{\pm 1}{x_0^{3/2}}+a\right)s
\end{array}
\right)
.
$$
\begin{remark}
\label{rem:periods}
For $a=1$ we see that the period of a circular orbit is either
$$
\pm \frac{2\pi}{\frac{\pm 1}{x_0^{3/2}}+1},
$$
which can also be expressed in terms of the energy, see Equation~\eqref{eq:synodical}.
The retrograde orbit has positive angular momentum $t=\sqrt x_0$, and the direct orbit has angular momentum $t=-\sqrt x_0$.
Note that the latter interpretation depends on the sign of $a$, which we have taken to be positive.
\end{remark}

\subsection{Linearized equations}
Let us now linearize the equations near a circular orbit.
We shall do this by expanding $(r+\Delta r,t+\Delta t,x+\Delta x,y+\Delta y)$ near $(0,t_0=\pm \sqrt x_0,x_0,y)$.
This leads to the linearized equations (keep in mind that $y$-term in the total flow has a $0$-th order contribution).
\begin{equation}
\label{eq:linearized_flow}
\left(
\begin{array}{c}
\dot{\Delta r} \\
\dot{\Delta t} \\
\dot{\Delta x} \\
\dot{\Delta y}
\end{array}
\right)
=
\left(
\begin{array}{cccc}
0 & \frac{2t_0}{x_0^{3}} & -\frac{1}{x_0^3} & 0 \\
0 & 0 & 0 & 0 \\
1 & 0 & 0 & 0 \\
0 & \frac{1}{x_0^2} & -\frac{2t_0}{x_0^{3}} & 0
\end{array}
\right)
\left(
\begin{array}{c}
\Delta r \\
\Delta t \\
\Delta x \\
\Delta y
\end{array}
\right)
\end{equation}
Note that these linearized equations are autonomous.

\subsection{Trivialization of the contact structure in the unregularized problem}
In order to compute the Maslov index we choose a convenient trivialization of the contact structure.
The canonical $1$-form in the unregularized problem is given by $\lambda=-qdp$.
In terms of polar coordinates for $q$, this becomes
$$
\lambda=-xdr+tdy.
$$
The Hamiltonian for the rotating Kepler problem is given by
$$
H=\frac{1}{2}(r^2+\frac{t^2}{x^2})+at-\frac{1}{x}.
$$
We find a trivialization of $\ker \lambda|_{H^{-1}(-c)}$ by looking at the $\ker \lambda \cap \ker dH$.
We shall choose
$$
\tilde X_1=  \frac{t}{x}\partial_r -\frac{rx}{tx+1}\partial_t+\frac{rx^3}{tx+1}\partial_x+\partial_y,
\quad
\tilde X_2=-\frac{(x-t^2)}{x(tx+1)}\partial_t+\frac{(x^2+t)}{tx+1}\partial_x.
$$
In Cartesian coordinates the vector $\tilde X_1$ has no components in the $\partial_p$ direction, so it is a multiple of $T \Pi X_1$, see the observation in Section~\ref{sec:trivialization_after_stereo_proj}.
Since $d\lambda(\tilde X_1,\tilde X_2)\neq 0$ for $|q|\neq 0$, we see that the pair $(\tilde X_1,\tilde X_2)$ trivializes the contact structure of the rotating Kepler problem.
Furthermore, away from $|q|=0$, this trivialization has the same homotopy class as the global trivialization $(X_1,X_2)$, so we can compute everything in terms of $(\tilde X_1,\tilde X_2)$.

\subsection{Computation of the Maslov index}

\begin{proposition}\label{prop:index_circular_orbit}
Let $\gamma_{+}$ be the simple retrograde circular orbit of the rotating Kepler problem.
Then the (unregularized) period of $\gamma_+$ is equal to
$$
S_+=\frac{2\pi}{(-2E)^{3/2}+1}.
$$
Suppose that $NS_+\notin \Z \frac{2\pi}{(-2E)^{3/2}}$.
Then $N$-th iterate of $\gamma_+$ is non-degenerate and its Conley-Zehnder index is equal to
$$
\mu( \gamma_{+,N})=
1+2 \max \Big\{ k\in \Z \mid k\frac{2\pi}{(-2E)^{3/2}}<NS_+ \Big\}.
$$

Similarly, let $\gamma_-$ be the simple direct circular orbit of the rotating Kepler problem.
Let $S_-$ denote the period of $\gamma_-$,
$$
S_-=\frac{2\pi}{(-2E)^{3/2}-1}.
$$
Suppose that $NS_-\notin \Z \frac{2\pi}{(-2E)^{3/2}}$.
Then the $N$-th iterate of $\gamma_-$ is non-degenerate and its Conley-Zehnder index is equal to
$$
\mu( \gamma_{-,N})=
1+2 \max \Big\{ k\in \Z \mid k\frac{2\pi}{(-2E)^{3/2}}<NS_- \Big\}
.
$$
\end{proposition}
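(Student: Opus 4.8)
The plan is to compute the Conley-Zehnder index directly from its definition in the appendix: build the path of symplectic $2\times 2$ matrices obtained by restricting the linearized Reeb flow to the contact distribution $\xi=\ker\lambda\cap\ker dH$, expressed in the trivialization $(\tilde X_1,\tilde X_2)$, and evaluate its Maslov index using the crossing form. Because the circular orbit never meets the collision locus $\{|q|=0\}$, and since $(\tilde X_1,\tilde X_2)$ is homotopic there to the global trivialization $(X_1,X_2)$ of Section~\ref{sec:trivialization_after_stereo_proj}, this yields the genuine index. The first step is to integrate the linearized system \eqref{eq:linearized_flow}. It is autonomous and decouples: $\Delta t$ is conserved, $\Delta x$ satisfies $\ddot{\Delta x}+\omega^2\Delta x=\tfrac{2t_0}{x_0^3}\Delta t$ with $\omega^2=x_0^{-3}$, and $\Delta r=\dot{\Delta x}$, while $\Delta y$ is recovered by one quadrature (a linear drift plus an oscillatory term). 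Using $t_0^2=x_0$ and $E=-\tfrac1{2x_0}$ one checks the characteristic frequency is exactly $\omega=x_0^{-3/2}=(-2E)^{3/2}$, which is why the quantity $\tfrac{2\pi}{(-2E)^{3/2}}$ governs the answer.

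Next I would evaluate the trivialization along the orbit. Since $x\equiv x_0$, $r\equiv0$, $t\equiv t_0$ are constant there, $\tilde X_1$ and $\tilde X_2$ are constant vectors in these coordinates, and because $x_0-t_0^2=0$ the formulas collapse to $\tilde X_1=\tfrac{t_0}{x_0}\partial_r+\partial_y$ and $\tilde X_2=\beta\,\partial_x$ with $\beta=\tfrac{x_0^2+t_0}{t_0x_0+1}$. The crucial simplification is that $X_{H}=(\tfrac{t_0}{x_0^2}+1)\partial_y$ points purely in the $\partial_y$ (Reeb) direction, so reducing the linearized Hamiltonian flow modulo the flow line amounts to discarding the $\partial_y$ component. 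Feeding $\tilde X_1,\tilde X_2$ through the flow of the first step and projecting away $\partial_y$, I expect the reduced path
\[
\Psi(s)=\begin{pmatrix} \cos\omega s & -\tfrac{\omega\beta x_0}{t_0}\sin\omega s \\ \tfrac{t_0}{x_0\omega\beta}\sin\omega s & \cos\omega s \end{pmatrix},
\]
which has determinant $1$ and is conjugate to the rotation by angle $\omega s$. The $N$-th iterate runs for unregularized time $NS_\pm$, where $S_\pm=\tfrac{2\pi}{(-2E)^{3/2}\pm1}$ is the period $\tfrac{2\pi}{|t_0x_0^{-2}+1|}$ of Remark~\ref{rem:periods}, so the path sweeps total angle $\Theta=\omega NS_\pm$.

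The Maslov index is then a standard rotation count. Crossings occur exactly where $\omega s\in2\pi\Z$; there $\Psi(s)=\id$, so $V_s=\xi$ and the crossing form is $Q(v)=d\lambda|_\xi(v,\dot\Psi(s)v)$ on all of $\xi$. I would verify that $d\lambda(\tilde X_1,\tilde X_2)=\tfrac{t_0\beta}{x_0}$ and the twist $k=\tfrac{\omega\beta x_0}{t_0}$ are both positive, whence each $Q$ is positive definite with $\sgn Q=2$. Adding $\tfrac12\sgn Q_0=1$ at $s=0$ and $+2$ for each interior crossing gives $\mu=1+2\,\#\{k\ge1:\,k\tfrac{2\pi}{\omega}<NS_\pm\}=1+2\max\{k\in\Z:\,k\tfrac{2\pi}{(-2E)^{3/2}}<NS_\pm\}$, exactly the asserted formula. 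The hypothesis $NS_\pm\notin\Z\tfrac{2\pi}{(-2E)^{3/2}}$ is precisely the statement that the right endpoint $s=NS_\pm$ is not a crossing, i.e. that $\Psi(NS_\pm)$ has no eigenvalue $1$ on $\xi$, giving non-degeneracy.

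The main obstacle is the sign bookkeeping of the last step, carried out uniformly for both orbits. For the retrograde orbit $t_0=\sqrt{x_0}>0$ and $\beta>0$, so positivity of $d\lambda(\tilde X_1,\tilde X_2)$ and of $k$ is immediate; for the direct orbit $t_0=-\sqrt{x_0}<0$ and $\beta<0$, and one must track these signs to confirm that $\tfrac{t_0\beta}{x_0}>0$ and $k>0$ still hold, so that the reduced path rotates positively relative to the symplectic orientation of $\xi$ and every crossing form again has signature $+2$; this is what makes the two cases produce identical formulas. A subsidiary point needing care is the justification that projecting the linearized Hamiltonian flow along $\partial_y$ computes the Conley-Zehnder-relevant path, which rests on the invariance of $X_H$ under its own linearized flow and the identification of $\ker dH/\langle X_H\rangle$ with $(\xi,d\lambda|_\xi)$.
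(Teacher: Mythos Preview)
Your approach is essentially the same as the paper's: linearize the Hamiltonian flow along the circular orbit in polar coordinates, express it in the trivialization $(\tilde X_1,\tilde X_2)$ (which on the orbit collapses to $\tfrac{t_0}{x_0}\partial_r+\partial_y$ and $\beta\,\partial_x$ with $\beta=t_0$, since $t_0^2=x_0$), project out the $\partial_y$-direction, and obtain a rotation-type $2\times 2$ path with crossings at $s\in 2\pi t_0^3\Z$ whose crossing form has signature $+2$. Your write-up is in fact more explicit than the paper's on the sign bookkeeping for the direct orbit ($t_0<0$, $\beta<0$, hence $d\lambda(\tilde X_1,\tilde X_2)=t_0\beta/x_0=1>0$ and $k=\omega x_0>0$) and on why the endpoint hypothesis yields non-degeneracy; the paper simply records the reduced matrix and asserts the signature.
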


\begin{proof}
The periods of the retrograde and direct orbit have already been computed in section~\ref{rem:periods}.
Throughout the proof, we shall use that $t_0^2=x_0$ and $r_0=0$ at circular orbits.
Hence we see that, at the circular orbit, the tangent space to a level set of $H_a$ is trivialized by
\begin{align*}
&\tilde X_1=\frac{t_0}{x_0}\partial_r +\partial_y=\frac{1}{t_0}\partial_r +\partial_y,\quad \tilde X_2=\frac{x_0^2+t_0}{x_0t_0+1}\partial_x=t_0 \partial_x,\\
& X_H =(\frac{t_0}{x_0^2}+1) \partial_y=(\frac{1}{t_0^3}+1) \partial_y.
\end{align*}
We compute the linearized flow from Equation~\eqref{eq:linearized_flow} with respect to this trivialization.
We obtain
$$
\dot {\tilde \psi}=\tilde L \tilde \psi,
$$
where
$$
\tilde L=
\left(
\begin{array}{ccc}
0 & -\frac{1}{t_0^4} & 0 \\
\frac{1}{t_0^2} & 0 & 0\\
0 & -\frac{t_0^4}{\frac{1}{t_0^3}+1} & 0
\end{array}
\right)
.
$$
Since we have linearized the Hamiltonian rather than the Reeb vector field, we need to project to the contact structure spanned by $\tilde X_1,\tilde X_2$.
For this just take the top-left $2\times 2$-block of $\tilde L$.
This yields the map
$$
\tilde \psi|_{\xi}(s)=
\left(
\begin{array}{cc}
\cos (\frac{s}{t_0^{3}}) &  -\frac{1}{t_0}\sin (\frac{s}{t_0^{3}})\\
t_0 \sin (\frac{s}{t_0^{3}}) & \cos (\frac{s}{t_0^{3}})
\end{array}
\right)
.
$$
This path of symplectic matrices has crossings at $s\in t_0^{3}2\pi \Z$.
Note that $E=-\frac{1}{2x_0}=-\frac{1}{2t_0^2}$.
Since the crossing form has signature $2$, we obtain the claim.
\end{proof}

For a geometric interpretation of the above computation we include the following remark.
\begin{remark}
On the round $2$-sphere an $N$-fold cover of a primitive closed geodesic has Morse index $2N-1$. Moreover, they form critical manifolds, the unit tangent bundle, which are diffeomorphic to $\RP^3$. After switching on the rotation $a$ the $\RP^3$ breaks up into two circles, corresponding to the direct and retrograde circular orbit. Hence, for very large $c$ the $N$-fold cover of the retrograde circular orbit has Morse index $2N-1$ and the $N$-fold cover of the direct circular orbit has Morse index $2N+1$.
\end{remark}

\bibliographystyle{amsalpha}
\bibliography{bibtex_paper_list}

%

\end{document}